 \newtheorem{thm}{Theorem}[section]
 \newtheorem{lem}[thm]{Lemma}
 \theoremstyle{definition}
 \theoremstyle{remark}
 \newtheorem{rem}[thm]{Remark}
 \numberwithin{equation}{section}
\newcommand{\g}{L^{1}(G)}
\newcommand{\m}{M(G)}
\begin{document}

\title[characterizing derivations and anti-derivations on group algebras...]
 {characterizing derivations and anti-derivations on group algebras through orthogonality}

\author{Hoger Ghahramani}
\thanks{{\scriptsize
\hskip -0.4 true cm \emph{MSC(2010)}: 22D15; 47B47; 15A86. 
\newline \emph{Keywords}: group algebra; derivation; orthogonality. \\}}

\address{Department of
Mathematics, University of Kurdistan, P. O. Box 416, Sanandaj,
Iran.}

\email{h.ghahramani@uok.ac.ir; hoger.ghahramani@yahoo.com}


\address{}

\email{}

\thanks{}

\thanks{}

\subjclass{}

\keywords{}

\date{}

\dedicatory{}

\commby{}

\begin{abstract}
Let $\g$ and $\m$ be group algebra and measure algebra, respectively of a locally compact group $G$ and $\Delta:\g \rightarrow \m$ be a continuous linear map. We consider $\Delta$ behaving like derivation or anti-derivation at orthogonal elements for several types of orthogonality conditions and we characterize such maps. Indeed we consider that $\Delta$ is a derivation or anti-derivation through orthogonality conditions on $\g$ such as $f*g=0$, $f*g^{\star}=0$, $f^{\star}*g=0$, $f*g=g*f=0$ and $f*g^{\star}=g^{\star}*f=0$.
\end{abstract}

\maketitle
\section{ Introduction}
Throughout this paper all algebras and vector spaces will be over the complex field
$\mathbb{C}$. Let $A$ be an algebra and $M$ be an $A$-bimodule. Recall that a linear map $D:A\rightarrow M$ is said a \textit{derivation} if $D(ab) = aD(b)+D(a)b $ for all $a, b \in A$. Each map of the form
$a \mapsto am-ma$, where $m \in M$, is a derivation which will be called an \textit{inner
derivation}. Also $D$ is called an \textit{anti-derivation} if $D(ab) =bD(a)+D(b)a$ for all $a,b \in A$. There have been a number of papers concerning the study of conditions under which mappings of (Banach) algebras can be completely determined by the action on some sets of points. We refer the reader to \cite{al1, al10, bre, chb, gh1, gh2} for a full account of the topic and a list of references. In the case of derivations, the subsequent condition attracted much attention of some mathematicians:
\[a, b \in A, \,\, ab=z\Rightarrow \Delta(ab) = a\Delta(b)+\Delta(a)b \quad (\blacklozenge),\]
where $z\in A$ is a fixed point and $\Delta:A\rightarrow M$ is a linear (additive) map. Bre$\check{\textrm{s}}$ar, \cite{bre} study the derivations of rings with idempotents in this direction with $z=0$. It was shown in \cite{bre} that if $A$ is a prime ring containing a nontrivial idempotent and $\Delta:A\rightarrow A$ is an additive map satisfying $(\blacklozenge)$ with $z=0$, then $\Delta(a)=D(a)+ca$ ($a\in A$) where $D$ is an additive derivation and $c$ is a central element of $A$. Note that the nest algebras are important operator algebras that are not prime. Jing et al. in \cite{jin} showed that, for the cases of nest algebras on a Hilbert space and standard operator algebras in a Banach space, the set of linear maps satisfying $(\blacklozenge)$ with $z=0$ and $\Delta(I)=0$ coincides with the set of inner derivations. Then, many studies have been done in this case and different results were obtained, for instance, see \cite{al1, al10, bre, chb, che,  gh1, gh12} and the references therein.
\par 
The other direction is to study linear (additive) maps that behave like homomorphisms of (Banach) algebras when acting on special products. Especially, one of the interesting question is the characterizing linear maps of group algebras and other Banach algebras associated with a locally compact group behaving like homomorphisms at zero product elements or orthogonal elements. This question has been extensively studied \cite{al1, al10, al11, al111, al2, fo, fo2, la, lin, mo}. 
\par 
Motivated by these reasons, in this paper we consider the problem of characterizing continuous linear maps on group algebras behaving like derivations or anti-derivations at orthogonal elements for several types of orthogonality conditions. In particular, in this paper we consider the subsequent conditions on a continuous linear map $\Delta$ from a group algebra $\g$ into the measure convolution algebra $\m$ where $G$ is a locally compact group:
\begin{enumerate}
\item[(i)] \textit{derivations through one-sided orthogonality conditions}
\begin{enumerate}
\item[(D1)] \[ f*g=0 \Longrightarrow f*\Delta(g)+\Delta(f)*g=0\quad (f,g \in \g);\]
\item[(D2)]  \[f*g^{\star}=0 \Longrightarrow f*\Delta(g)^{\star}+\Delta(f)*g^{\star}=0;\]
\item[(D3)]  \[f^{\star}*g=0 \Longrightarrow f^{\star}*\Delta(g)+\Delta(f)^{\star}*g=0;\]
\end{enumerate}
\item[(ii)] \textit{anti-derivations through one-sided orthogonality conditions}
\begin{enumerate}
\item[(D4)] \[ f*g=0 \Longrightarrow g*\Delta(f)+\Delta(g)*f=0;\]
\item[(D5)] \[ f*g^{\star}=0 \Longrightarrow \Delta(g)^{\star}*f+g^{\star}*\Delta(f)=0;\]
\item[(D6)] \[f^{\star}*g=0 \Longrightarrow \Delta(g)*f^{\star}+g*\Delta(f)^{\star}=0;\]
\end{enumerate}
\item[(iii)] \textit{Derivations through two-sided orthogonality conditions}
\begin{enumerate}
\item[(D7)] \[ f*g=g*f=0 \Longrightarrow f*\Delta(g)+\Delta(f)*g=g*\Delta(f)+\Delta(g)*f=0;\]
\item[(D8)]
\[ f*g^{\star}=g^{\star}*f=0 \Longrightarrow f*\Delta(g)^{\star}+\Delta(f)*g^{\star}=\Delta(g)^{\star}*f+g^{\star}*\Delta(f)=0;\]
\end{enumerate}
\end{enumerate}
where $f,g \in \g$, the convolution product is denoted by $*$ and the involution is denoted by $\star$. It is worth noting that the conditions $D1$ and $D4$, $D2$ and $D3$, $D5$ and $D6$, $D7$ and $D8$ agree in the case where the group $G$ is abelian.
\par 
Our purpose is to investigate whether the above conditions characterize 
derivations ($\star$-derivations) or anti-derivations ($\star$-anti-derivations). This article is organized as follows. In section 2 some preliminaries are given. Section 3 is concerned with characterizing derivations and anti-derivations through one-sided orthogonality conditions (conditions $D1-D6$). In the last section continuous linear maps of group algebras of a $SIN$ group satisfying in conditions $D7$ and $D8$ (derivations through two-sided orthogonality conditions) are considered.
\par 
We note that the centre of an algebra $A$ are written by $\mathcal{Z}(A)$.

\section{ Preliminaries}
Let $G$ be a locally compact group. The \textit{group algebra} and the \textit{measure convolution algebra} of $G$, are denoted by $\g$ and $\m$, respectively. The convolution product is denoted by $*$ and the involution is denoted by $\star$. The element $\delta_{e}$ is the identity of $\m$, where $\delta_{e}$ is the point mass at $e\in G$ and $e$ is the identity of $G$. The measure algebra $\m$ is a unital Banach $\star$-algebra, and $\g$ is a closed ideal in $\m$, identified with the subspace of $\m$ consisting of measures which are absolutely continuous with respect to the Haar measure. If a net $(\lambda_{i})_{i\in I}$ in $\m$ converges to $\lambda \in \m$ with respect to the weak$^{*}$ topology, we write it by $\lambda_{i}\xlongrightarrow{w^{*}} \lambda$. Every group algebra $\g$ has a bounded approximate identity. The group $G$ is a $SIN$ \textit{group} if it has a base of compact neighborhoods of the the identity that is invariant under all inner automorphisms. If $G$ is a $SIN$ group, we denote it by $G\in [SIN]$. It is known that the group algebra $\g$ has a bounded approximate identity consisting of functions in $\mathcal{Z}(\g)$ if and only if $G\in [SIN]$. We refer the reader to \cite [Section 3.3]{da} for the essential information about the group algebras and measure algebras. Also see \cite [section 12.5 and 12.6]{pa} for a discussion of the class of $SIN$ groups.
\par 
In order to prove our results we need the following results.
\begin{lem}(\cite [Lemma 1.1] {al2}).\label{bi}
Let $G$ be a locally compact group, and let $\phi: \g \times \g\rightarrow X$ be a continuous bilinear map, where $X$ is a Banach space. 
\begin{enumerate}
\item[(i)] Suppose that 
\[ f,g \in \g, \, \, f*g=0\Longrightarrow \phi(f,g)=0.\]
Then 
\[ \phi(f*g,h)=\phi(f,g*h),\]
for all $f,g\in\g$.
\item[(ii)] Suppose that 
\[ f,g \in \g, \, \, f*g=g*f=0\Longrightarrow \phi(f,g)=0.\]
Then 
\[ \phi(f*g,h)=\phi(f,g*h),\]
for all $f,h\in \mathcal{Z}(\g)$ and $g\in\g$, and
\[ \phi(f*g,h*k)-\phi(f,g*h*k)+\phi(k*f,g*h)-\phi(k*f*g,h)=0,\]
for all $f,g,h,k\in\g$.
\end{enumerate}
\end{lem}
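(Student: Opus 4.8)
The plan is to read both statements as \emph{cocycle-type identities} forced by the vanishing of $\phi$ on (one-sided, resp.\ two-sided) orthogonal pairs, and to exploit two structural features of $\g$: it carries a bounded approximate identity, and $\m$ acts on it by translations through the point masses $\delta_{x}$, which preserve both membership in $\g$ (since $\g$ is a closed ideal of $\m$) and orthogonality. In the language of Alaminos--Bre\v{s}ar--Extremera--Villena, part (i) is precisely the assertion that $\g$ has property $(\mathbb{B})$, and part (ii) is its two-sided refinement.

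First I would treat (i). By continuity of $\phi$ and of convolution it suffices to establish $\phi(f*g,h)=\phi(f,g*h)$ on a dense, convenient class of $f,g,h$; Cohen's factorization theorem, applied with the bounded approximate identity, lets me assume each argument is itself a convolution product. The crucial point is to manufacture orthogonal pairs. I would first note that the hypothesis is translation stable: if $f*g=0$ then $(\delta_{x}*f)*g=\delta_{x}*(f*g)=0$ and $f*(g*\delta_{x})=(f*g)*\delta_{x}=0$, so $\phi$ vanishes on all such translates, which stay in $\g$ by the ideal property. Separating the ``supports'' of the two arguments — concretely via the Fourier/Gelfand transform on $\widehat{G}$ when $G$ is abelian, and via the closed-ideal structure together with localized approximate units in general — produces, for prescribed $f,g,h$, families of genuinely orthogonal pairs whose $\phi$-values, after a limiting and partition argument, telescope to $\phi(f*g,h)-\phi(f,g*h)$. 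The cocycle identity then follows.

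For (ii) the hypothesis is weaker, since only two-sided orthogonal pairs are controlled, so the two conclusions are correspondingly weaker. The first identity is obtained by specializing the argument of (i): if $f\in\mathcal{Z}(\g)$ then $f*g=0$ forces $g*f=f*g=0$, so one-sided orthogonality against a central element is automatically two-sided and hence detected by $\phi$. Thus, for central $f$ and $h$, the separation argument of (i) — now carried out with one factor taken central — produces the required orthogonal pairs and yields $\phi(f*g,h)=\phi(f,g*h)$ for $f,h\in\mathcal{Z}(\g)$, $g\in\g$. The four-term identity is the natural \emph{symmetric} cocycle compatible with two-sided orthogonality: since one can no longer isolate a single product, I would symmetrize the telescoping of (i), pairing translated data $f,g,h,k$ so that the resulting combination is two-sided orthogonal and its vanishing $\phi$-value expands, with the correct signs, into exactly $\phi(f*g,h*k)-\phi(f,g*h*k)+\phi(k*f,g*h)-\phi(k*f*g,h)$.

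The main obstacle, in both parts, is the passage from ``$\phi$ vanishes on orthogonal pairs'' to the global identity: orthogonal pairs in $\g$ are scarce and arise from cancellation rather than from disjoint supports (indeed, already in $\ell^{1}(\mathbb{Z})$ they correspond to Wiener-algebra functions whose zero sets cover the dual), so one must construct them by spectral/ideal-theoretic separation and then control an \emph{arbitrary} Banach-space-valued $\phi$ through the ensuing limiting and partition-of-unity steps. In (ii) this is compounded by the need to keep the construction genuinely two-sided; verifying that the symmetrized pairing closes up to precisely the four stated terms, with the correct signs, is the delicate bookkeeping, and it is there that centrality and the availability of invariant approximate units do the real work.
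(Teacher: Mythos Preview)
The paper does not prove this lemma; it is quoted, without argument, from \cite[Lemma~1.1]{al2} (which in turn rests on \cite{al1}). So there is no proof in the present paper to compare your attempt against, and your write-up is being measured only against the cited sources.

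Several of your observations are correct and relevant: part~(i) is exactly the assertion that $\g$ enjoys property~$(\mathbb{B})$ in the sense of \cite{al1}; translation by point masses $\delta_{x}$ stays inside $\g$ and preserves one-sided orthogonality; and, for the first identity in~(ii), centrality of $f$ turns $f*g=0$ into the two-sided condition $f*g=g*f=0$, so the weaker hypothesis already controls those pairs. These are the right peripheral ingredients.

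The genuine gap is the core step. You assert that ``separating the supports of the two arguments --- concretely via the Fourier/Gelfand transform on $\widehat{G}$ \ldots\ --- produces families of genuinely orthogonal pairs whose $\phi$-values, after a limiting and partition argument, telescope to $\phi(f*g,h)-\phi(f,g*h)$'', but no such families are constructed and no telescoping is exhibited. For arbitrary $f,g,h$ there is no evident decomposition of $\phi(f*g,h)-\phi(f,g*h)$ as a limit of sums $\sum_{j}\phi(a_{j},b_{j})$ with $a_{j}*b_{j}=0$ coming from a dual-side partition of unity; even in the abelian case, refining a cover of $\widehat{G}$ does not by itself force the individual summands $\phi(f*g_{j},h)-\phi(f,g_{j}*h)$ to vanish or to cancel. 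The same objection applies, a fortiori, to your symmetrized scheme for the four-term identity in~(ii): you state that the pairing ``closes up to precisely the four stated terms'' but give no mechanism for this.

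The route taken in \cite{al1,al2} is different in kind. It works on the group side, not the dual side: one uses the $\m$-bimodule structure of $\g$ through the translations $f\mapsto f*\delta_{x}$, $h\mapsto \delta_{x}*h$, together with a bounded approximate identity, to reduce the cocycle identity to a pointwise statement in $x\in G$, and then recovers the general identity by writing $f*g$ and $g*h$ as $\g$-valued integrals of translates and invoking the continuity of $\phi$. The two-sided case in~(ii) follows the same pattern, symmetrized; the four-term relation is what survives when only two-sided orthogonality is available. Your outline never isolates this reduction, and the ``spectral/ideal-theoretic separation'' you propose is a different, and as written incomplete, strategy.
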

\begin{lem}(\cite [Lemma 1.3] {al2}).\label{zz}
Let $G$ be a locally compact group, and let $\mu\in \m$.
\begin{enumerate}
\item[(i)] Suppose that $\mu*\g=\lbrace0\rbrace$. Then $\mu=0$.
\item[(ii)] Suppose that $\mu*f=f*\mu$ for each $f\in \g$. Then $\mu \in \mathcal{Z}(\m)$.
\end{enumerate}
\end{lem}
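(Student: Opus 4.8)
The plan is to prove (i) directly from the existence of a bounded approximate identity of $\g$ (recorded in the preliminaries), and then to derive (ii) from (i) together with the two structural facts that $\g$ is an ideal of $\m$ and that the convolution product is associative. Part (i) is really the engine of the lemma; part (ii) should follow from it by a short formal manipulation.

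For (i) I would use a bounded approximate identity $(e_{i})_{i\in I}$ of $\g$, which one may take to consist of nonnegative functions of integral one whose supports shrink to the identity $e$ of $G$. The key point is that for every $\mu\in\m$ one has $\mu*e_{i}\xlongrightarrow{w^{*}}\mu$ in $\m=C_{0}(G)^{*}$. Indeed, for $\phi\in C_{0}(G)$ a change of variables gives
\[\langle \mu*e_{i},\phi\rangle=\int_{G}\Big(\int_{G}\phi(yz)\,e_{i}(z)\,dz\Big)\,d\mu(y),\]
and, since the inner integral converges to $\phi(y)$ uniformly in $y$ as the supports of the $e_{i}$ shrink to $e$ (using the uniform continuity of $\phi$ and the finiteness of $|\mu|$), dominated convergence yields $\langle \mu*e_{i},\phi\rangle\to\langle \mu,\phi\rangle$. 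Now if $\mu*\g=\{0\}$, then in particular $\mu*e_{i}=0$ for every $i$, and passing to the weak$^{*}$ limit forces $\mu=0$.

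For (ii), fix $\nu\in\m$; I want to show $\mu*\nu=\nu*\mu$, and by (i) it suffices to prove that $(\mu*\nu-\nu*\mu)*f=0$ for every $f\in\g$. Since $\g$ is an ideal of $\m$, the element $\nu*f$ lies in $\g$, so applying the hypothesis $\mu*h=h*\mu$ $(h\in\g)$ to $h=\nu*f$ gives $\mu*\nu*f=\nu*f*\mu$. On the other hand, applying the hypothesis to $h=f$ and then left-multiplying by $\nu$ gives $\nu*\mu*f=\nu*f*\mu$. Subtracting these identities yields $(\mu*\nu-\nu*\mu)*f=0$, as required; hence $\mu*\nu=\nu*\mu$ for every $\nu\in\m$, that is, $\mu\in\mathcal{Z}(\m)$.

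The only genuinely analytic ingredient is the weak$^{*}$ convergence $\mu*e_{i}\xlongrightarrow{w^{*}}\mu$ used in (i) — in effect the statement that $\g$ acts nondegenerately on $\m$ through its bounded approximate identity — and I expect this (standard) fact to be the main obstacle. Everything else is purely algebraic: once (i) is available, part (ii) follows formally by reducing the commutator $\mu*\nu-\nu*\mu$ to its action on $\g$ and invoking (i).
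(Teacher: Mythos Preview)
Your argument is correct and is the standard one. Note, however, that the paper does not supply its own proof of this lemma: it is quoted verbatim from \cite[Lemma~1.3]{al2}, so there is nothing in the present paper to compare against. Your proof of~(i) via $\mu*e_{i}\xlongrightarrow{w^{*}}\mu$ is exactly the mechanism the paper later exploits (in Remark~\ref{we}) to show that a bounded approximate identity of $\g$ may be taken to converge weak$^{*}$ to $\delta_{e}$ in $\m$; and your reduction in~(ii) --- using that $\g$ is a two-sided ideal of $\m$ to collapse $(\mu*\nu-\nu*\mu)*f$ to zero and then invoking~(i) --- is the expected route.
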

Note that by \cite [Theorem 6.3]{ke} the convolution product in  $\m$ is separately continuous with respect to the weak$^{*}$ topology, i.e., $\nu\mapsto \mu*\nu$ is $w^{*}$-continuous for each $\mu\in\m$ and $\mu\mapsto \mu*\nu$ is $w^{*}$-continuous for each $\nu\in\m$.
\begin{rem}\label{we}
Let $(u_{i})_{i\in I}$ be a bounded approximate identity of $\g$. Since $(u_{i})_{i\in I}$ is bounded, we can assume that it converges to $\mu \in \m$ with respect to the weak$^{*}$ topology. So by separately $w^{*}$-continuity of convolution product in $\m$ we have $u_{i}*f\xlongrightarrow{w^{*}} \mu* f$ for all $f\in \g$. On the other hand by the fact that $(u_{i})_{i\in I}$ is an approximate identity, for each $f\in \g$ we get $u_{i}*f\xlongrightarrow{w^{*}} f$ in $\m$. So $(\mu-\delta_{e})*\g=\lbrace 0 \rbrace $ and by Lemma \ref{zz}-$(i)$, it follows that $\mu=\delta_{e}$. Therefore we can assume that the group algebra $\g$ has a bounded approximate identity such that $u_{i}\xlongrightarrow{w^{*}} \delta_{e}$ in $\m$.
\end{rem}
Let $D:\g\rightarrow\m$ be a map. We say that $D$ is a $\star$-map whenever $D(f^{\star})=D(f)^{\star}$ for all $f\in \g$.
\begin{rem}\label{der}
Let $D:\g\rightarrow \m$ be a continuous derivation. According to \cite{lo} (derivation problem), there exists $\mu\in \m$ such that $D(f)=f*\mu-\mu*f$ for any $f\in\g$. If $D$ is a continuous $\star$-derivation, then $D(f^{\star})=D(f)^{\star}$ and hence $f^{\star}*\mu-\mu*f^{\star}=\mu^{\star}*f^{\star}-f^{\star}*\mu^{\star}$ for all $f\in\g$. So by Lemma \ref{zz}-$(ii)$ we have $Re\mu=\dfrac{1}{2}(\mu+\mu^{\star})\in \mathcal{Z}(\m)$. Conversely for $\mu\in \m$ with $Re\mu\in \mathcal{Z}(\m)$, the map $D:\g\rightarrow \m$ defined by $D(f)=f*\mu-\mu*f$ is a continuous $\star$-derivation.
\end{rem}
Let $A$ be an algebra. Recall that a linear map $D:A\rightarrow A$ is said to be a \textit{Jordan derivation} if $D(a^{2})=aD(a)+D(a)a$ for all $a\in A$. Clearly, each derivation is a Jordan derivation. The converse is, in
general, not true. Sinclair \cite{sin} shows that a continuous Jordan derivation on a semisimple Banach algebra is a derivation. Since $\g$ is a semisimple Banach algebra, it follows that any continuous Jordan derivation $D:\g\rightarrow \g$ is a derivation.
\section{Derivations and anti-derivations through one-sided orthogonality conditions}
In this section we will consider a linear map $\Delta:\g \rightarrow \m$ behaving like derivation or anti-derivation at one-sided orthogonality conditions. firstly we characterize derivations through one-sided orthogonality conditions. 
\begin{thm}\label{o1}
Let $G$ be a locally compact group, and let $\Delta:\g \rightarrow \m$ be a continuous linear map.
\begin{enumerate}
\item[(i)] Assume that 
\begin{equation*}
 f*g=0 \Longrightarrow f*\Delta(g)+\Delta(f)*g=0\quad (f,g \in \g).
\end{equation*}
Then there are $\mu, \nu\in \m$ such that $\Delta(f)=f*\mu-\nu*f$ for all $f \in \g$ and $\mu-\nu\in \mathcal{Z}(\m)$.
\item[(ii)] Assume that 
\begin{equation*}
 f*g^{\star}=0 \Longrightarrow f*\Delta(g)^{\star}+\Delta(f)*g^{\star}=0\quad (f,g \in \g).
 \end{equation*}
Then there are $\mu, \nu\in \m$ such that $\Delta(f)=f*\mu-\nu*f$ for all $f \in \g$ and $Re\mu\in \mathcal{Z}(\m)$.
\item[(iii)] Assume that
\begin{equation*}
 f^{\star}*g=0 \Longrightarrow f^{\star}*\Delta(g)+\Delta(f)^{\star}*g=0\quad (f,g \in \g).
 \end{equation*}
Then there are $\mu, \nu\in \m$ such that $\Delta(f)=f*\nu-\mu*f$ for all $f \in \g$ and $Re\mu\in \mathcal{Z}(\m)$.
\end{enumerate}
\end{thm}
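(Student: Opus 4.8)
The plan is to establish part (i) by a bilinear-map argument and then reduce parts (ii) and (iii) to the same machinery after linearising the involution. For part (i), define the continuous bilinear map $\phi:\g\times\g\to\m$ by $\phi(f,g)=f*\Delta(g)+\Delta(f)*g$. The hypothesis says exactly that $f*g=0$ implies $\phi(f,g)=0$, so Lemma \ref{bi}-(i) applies and gives $\phi(f*g,h)=\phi(f,g*h)$ for all $f,g,h\in\g$, that is
\[(f*g)*\Delta(h)+\Delta(f*g)*h=f*\Delta(g*h)+\Delta(f)*(g*h).\]
I then fix a bounded approximate identity $(u_i)$ with $u_i\xlongrightarrow{w^*}\delta_e$ (Remark \ref{we}), and, since $(\Delta(u_i))$ is bounded, pass to a subnet with $\Delta(u_i)\xlongrightarrow{w^*}\omega\in\m$. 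Substituting $h=u_i$ and letting $i\to\infty$, using norm-continuity of the $\g$-module actions on $\m$ together with separate $w^*$-continuity of convolution, yields
\[\Delta(f*g)=f*\Delta(g)+\Delta(f)*g-(f*g)*\omega,\]
while substituting $f=u_i$ produces the same identity with $(f*g)*\omega$ replaced by $\omega*(f*g)$. Comparing the two and using $\g*\g=\g$ forces $\omega*k=k*\omega$ for every $k\in\g$, so $\omega\in\mathcal{Z}(\m)$ by Lemma \ref{zz}-(ii).

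With $\omega$ central, a direct computation shows that $D(f):=\Delta(f)-f*\omega$ is a continuous derivation from $\g$ into $\m$ (the two excess ``middle'' terms coincide because $\omega$ is central). The solution of the derivation problem recorded in Remark \ref{der} then supplies $\eta\in\m$ with $D(f)=f*\eta-\eta*f$, whence $\Delta(f)=f*(\eta+\omega)-\eta*f$; taking $\mu=\eta+\omega$ and $\nu=\eta$ gives the conclusion of (i) with $\mu-\nu=\omega\in\mathcal{Z}(\m)$.

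For parts (ii) and (iii) the difficulty is that $g\mapsto\Delta(g)^{\star}$ is conjugate-linear, so the relevant map is only sesquilinear and Lemma \ref{bi} is not directly available. I circumvent this by introducing the genuinely linear continuous map $\Lambda(g):=\Delta(g^{\star})^{\star}$. Replacing $g$ by $g^{\star}$ in the hypothesis of (ii) turns it into $f*g=0\Rightarrow f*\Lambda(g)+\Delta(f)*g=0$, so the bilinear map $(f,g)\mapsto f*\Lambda(g)+\Delta(f)*g$ satisfies the hypothesis of Lemma \ref{bi}-(i). Running the same approximate-identity argument, with limits $\Lambda(u_i)\xlongrightarrow{w^*}\mu_1$ and $\Delta(u_i)\xlongrightarrow{w^*}\mu_2$, and using the involution relation $\Lambda(k)^{\star}=\Delta(k^{\star})$ to connect the two resulting identities, I expect to obtain $\mu_2=\mu_1^{\star}$, the link $\Lambda(k)=\Delta(k)-\mu_1^{\star}*k+k*\mu_1$, and, after substitution, the single identity
\[\Delta(f*g)=f*\Delta(g)+\Delta(f)*g-f*\sigma*g,\qquad \sigma:=\mu_1^{\star}.\]
Then $D(f):=\Delta(f)-\sigma*f$ is a derivation, so $D(f)=f*\eta-\eta*f$ and $\Delta(f)=f*\eta-(\eta-\sigma)*f$, i.e.\ the form $f*\mu-\nu*f$ with $\mu=\eta$, $\nu=\eta-\sigma$. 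To get $Re\,\mu\in\mathcal{Z}(\m)$, which is where condition (ii) is genuinely used, I feed $\Delta(f)=f*\mu-\nu*f$ back into the involution relation $\Delta(k^{\star})^{\star}=\Delta(k)-\sigma*k+k*\sigma^{\star}$ and simplify; everything cancels except a commutator, leaving $(\mu+\mu^{\star})*k=k*(\mu+\mu^{\star})$ for all $k\in\g$, so $2\,Re\,\mu\in\mathcal{Z}(\m)$ by Lemma \ref{zz}-(ii). Part (iii) is the left–right mirror: set $\Theta(g):=\Delta(g^{\star})^{\star}$, rewrite the hypothesis as $f*g=0\Rightarrow f*\Delta(g)+\Theta(f)*g=0$, and argue symmetrically to reach $\Delta(f)=f*\nu-\mu*f$ with $Re\,\mu\in\mathcal{Z}(\m)$.

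The step I expect to be routine is part (i), once one spots the correction $f\mapsto\Delta(f)-f*\omega$. The main obstacle is the bookkeeping in (ii)–(iii): one must track the two linked linear maps $\Delta$ and $\Lambda$ through the $w^{*}$-limit identities, identify $\mu_2=\mu_1^{\star}$ and the relation between $\Lambda$ and $\Delta$, and then combine the resulting generalised-derivation identity with the involution relation so that all terms cancel to leave precisely the commutator that forces $Re\,\mu$ to be central.
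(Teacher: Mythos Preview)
Your argument is correct; all three parts go through as you outline them. The route, however, differs from the paper's in each part. For (i) the paper simply invokes \cite[Theorem~4.6]{al1} to write $\Delta=D+\xi\cdot(\,\cdot\,)$ with $D$ a derivation and $\xi\in\mathcal{Z}(\m)$, and then applies the derivation problem; your bilinear-map plus approximate-identity computation is a self-contained reproof of that cited fact in the present setting, which is longer but avoids the external reference. For (ii) the paper first normalises by setting $D(f)=\Delta(f)-\xi*f$ (with $\xi$ the $w^{*}$-limit of $\Delta(u_i)$), so that $D(u_i)\xlongrightarrow{w^{*}}0$; it then shows directly from the bilinear identity that $D(f*g^{\star})=D(f)*g^{\star}+f*D(g)^{\star}$ and $D(g^{\star})=D(g)^{\star}$, i.e.\ $D$ is a $\star$-derivation, and Remark~\ref{der} gives $Re\,\mu\in\mathcal{Z}(\m)$ immediately. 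Your version instead tracks the pair $(\Delta,\Lambda)$ and extracts $\mu_2=\mu_1^{\star}$ and the link $\Lambda(k)=\Delta(k)-\sigma*k+k*\sigma^{\star}$ before recognising $D(f)=\Delta(f)-\sigma*f$ as a derivation; this works, but the paper's early normalisation eliminates one of the two limit measures and makes the $\star$-compatibility transparent rather than something to be recovered at the end. For (iii) the paper is more economical still: it observes that $D(f)=\Delta(f^{\star})^{\star}$ satisfies the hypothesis of (ii) and applies (ii) directly, rather than rerunning a symmetric argument as you propose. In short, both approaches rely on Lemma~\ref{bi}(i), the approximate identity, and the derivation problem; the paper front-loads the normalisation and the $\star$-reduction, while you carry both maps through and cancel at the end.
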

\begin{proof}
$(i)$ By \cite [Theorem 4.6]{al1} and Lemma \ref{zz}-$(i)$, there is a continuous derivation $D:\g \rightarrow \m$ and a measure $\xi \in \mathcal{Z}(\m)$ such that $\Delta(f)=D(f)+\xi*f$ for all $f \in \g$. From derivation problem $D(f)=f*\mu-\mu*f$ for all $f\in \g$, where $\mu\in \m$. Setting $\nu=\mu-\xi$. So $\Delta(f)=f*\mu-\nu*f$ for all $f \in \g$ and $\mu-\nu\in \mathcal{Z}(\m)$.
\par 
$(ii)$ Suppose that $(u_i)_{i\in I}$ is a bounded approximate identity of $\g$ such that $u_{i}\xlongrightarrow{w^{*}} \delta_{e}$, where $\delta_{e}$ is the identity of $\m$. Since the net $(\Delta(u_{i}))_{i\in I}$ is bounded, we can assume that it converges to $\xi \in \m$ with respect to the weak$^{*}$ topology. Define $D:\g\rightarrow \m$ by $D(f)=\Delta(f)-\xi*f$. Then $D$ is a continuous linear map which
satisfies 
\begin{equation}\label{1o4}
 f*g^{\star}=0 \Longrightarrow f*D(g)^{\star}+D(f)*g^{\star}=0\quad (f,g \in \g),
 \end{equation}
and $D(u_{i})\xlongrightarrow{w^{*}} 0$. We will show that $D$ is a  $\star$-derivation. In order to prove this we consider the continuous bilinear map $\phi:\g \times \g \rightarrow \m$ by $\phi(f,g)=f*D(g^{\star})^{\star}+D(f)*g$. If $f,g\in \g$ are such that $f*g=0$, then $f*(g^{\star})^{\star}=0$ and \eqref{1o4} gives $\phi(f,g)=0$. So by Lemma \ref{bi}-$(i)$, we get $ \phi (f*g,h)=\phi (f,g*h)$ for all $f,g,h \in \g$. Therefore
\begin{equation}\label{1o5}
f*g*D(h^{\star})^{\star}+D(f*g)*h=f*D(h^{\star}*g^{\star})^{\star}+D(f)*g*h,
 \end{equation}
 for all $f,g,h \in \g$. On account of \eqref{1o5}, for all $g,h \in \g$ we have
 \[u_{i}*g*D(h^{\star})^{\star}+D(u_{i}*g)*h=u_{i}*D(h^{\star}*g^{\star})^{\star}+D(u_{i})*g*h.\]
From continuity of $D$, we get $u_{i}*D(h^{\star}*g^{\star})^{\star}+D(u_{i})*g*h $ converges to $g*D(h^{\star})^{\star}+D(g)*h$ with respect to the norm topology. On the other hand, from separately $w^{*}$-continuity of convolution product in $\m$ and $D(u_{i})\xlongrightarrow{w^{*}} 0$ it follows that
$u_{i}*D(h^{\star}*g^{\star})^{\star}+D(u_{i})*g*h \xlongrightarrow{w^{*}}D(h^{\star}*g^{\star})^{\star}$. Hence 
\begin{equation}\label{1o6}
D(f*g^{\star})=D(f)*g^{\star}+f*D(g)^{\star},
 \end{equation}
for all $f,g \in \g$. Now letting $f=u_{i}$ in \eqref{1o6}, we obtain
\[D(u_{i}*g^{\star})=D(u_{i})*g^{\star}+u_{i}*D(g)^{\star},\]
for all $g\in \g$. By continuity of $D$, $D(u_{i})\xlongrightarrow{w^{*}} 0$ and using similar arguments as above it
follows that $D(g^{\star})=D(g)^{\star}$ for all $g\in \g$. Thus $D$ is a $\star$-derivation and by Remark \ref{der}, there is a $\mu\in \m$ with $Re\mu\in \mathcal{Z}(\m)$, such that $D(f)=f*\mu-\mu*f$ for all $f\in \g$. Setting $ \nu=\mu-\xi$. From definition of $D$, we arrow at $\Delta(f)=f*\mu-\nu*f$ for all $f \in \g$ where $Re\mu\in \mathcal{Z}(\m)$.
\par 
$(iii)$ Consider the map $D:\g\rightarrow \m$ defined by $D(f)=\Delta(f^{\star})^{\star}$. It is easily seen that the map $D$ satisfies
\[  f*g^{\star}=0 \Longrightarrow f*D(g)^{\star}+D(f)*g^{\star}=0\quad (f,g \in \g).\]
By $(ii)$, there exists $\mu_{1}, \nu_{1}\in \m$ such that $D(f)=f*\mu_{1}-\nu_{1}*f$ for all $f \in \g$ and $Re\mu_{1}\in \mathcal{Z}(\m)$. Then $\Delta(f)=f*\nu-\mu*f$ for all $f\in \g$, where $\nu=-\nu_{1}^{\star}$, $\mu=-\mu_{1}^{\star}$ and $Re\mu\in \mathcal{Z}(\m)$.
\end{proof}
In the next theorem we characterize anti-derivations through one-sided orthogonality conditions. 
\begin{thm}\label{anti}
Let $G$ be a locally compact group, and let $\Delta:\g \rightarrow \m$ be a continuous linear map.
\begin{enumerate}
\item[(i)] Assume that 
\[ f*g=0 \Longrightarrow g*\Delta(f)+\Delta(g)*f=0\quad (f,g \in \g).\]
Then there are measures $\mu,\nu \in \m$ such that $\Delta(f)=f*\mu-\nu*f$, where $\mu-\nu\in \mathcal{Z}(\m)$ and 
\[ [[f,g],\mu]+2 [f,g]*(\mu-\nu)=0, \]
for all $f,g\in \g$.
\item[(ii)] Assume that 
\begin{equation*}
 f*g^{\star}=0 \Longrightarrow \Delta(g)^{\star}*f+g^{\star}*\Delta(f)=0\quad (f,g \in \g).
 \end{equation*}
Then there are $\mu, \nu\in \m$ such that $\Delta(f)=f*\nu-\mu*f$, where $Re\mu\in \mathcal{Z}(\m)$ and 
\[ [[f,g],\mu]+(\nu-\mu)^{\star}*[f,g]+[f,g]*(\nu-\mu)=0,\]
for all $f,g \in \g$ .
\item[(iii)] Assume that 
\begin{equation*}
 f^{\star}*g=0 \Longrightarrow \Delta(g)*f^{\star}+g*\Delta(f)^{\star}=0\quad (f,g \in \g).
 \end{equation*}
Then there are $\mu, \nu\in \m$ such that $\Delta(f)=f*\mu-\nu*f$, where $Re\mu\in \mathcal{Z}(\m)$ and 
\[ [[f,g],\mu]+[f,g]*(\mu-\nu)^{\star}+(\mu-\nu)[f,g]=0,\]
for all $f,g \in \g$ .
\end{enumerate}
\end{thm}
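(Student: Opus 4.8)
The plan is to treat (i) as the core case and deduce the rest from it. For (i) I would encode the hypothesis in the continuous bilinear map $\phi:\g\times\g\to\m$, $\phi(f,g)=g*\Delta(f)+\Delta(g)*f$, so that $f*g=0$ forces $\phi(f,g)=0$. Lemma~\ref{bi}-$(i)$ then gives $\phi(f*g,h)=\phi(f,g*h)$, that is, $h*\Delta(f*g)+\Delta(h)*f*g=g*h*\Delta(f)+\Delta(g*h)*f$ for all $f,g,h\in\g$. I would fix a bounded approximate identity $(u_i)$ with $u_i\xlongrightarrow{w^{*}}\delta_e$ (Remark~\ref{we}) and, after passing to a subnet, let $\xi$ be the weak$^{*}$ limit of $\Delta(u_i)$. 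Substituting $h=u_i$ and letting $i$ run, separate weak$^{*}$-continuity of convolution together with norm-continuity of $\Delta$ yields $\Delta(f*g)=g*\Delta(f)+\Delta(g)*f-\xi*f*g$; substituting instead $f=u_i$ yields the same identity but with the correction term $f*g*\xi$. Comparing the two forces $\xi*(f*g)=(f*g)*\xi$, and since products are dense in $\g$ this gives $\xi*h=h*\xi$ for all $h\in\g$, whence $\xi\in\mathcal{Z}(\m)$ by Lemma~\ref{zz}-$(ii)$.

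With $\xi$ central I would set $\hat\Delta:=\Delta-\xi*(\,\cdot\,)$. Putting $g=f$ in the identity above and using centrality of $\xi$ shows $\hat\Delta(f*f)=f*\hat\Delta(f)+\hat\Delta(f)*f$, so $\hat\Delta$ is a continuous Jordan derivation from $\g$ into $\m$; note it is in general neither a derivation nor an anti-derivation, the defining identity matching only after symmetrization. The decisive step is to upgrade $\hat\Delta$ to a genuine derivation, the Jordan-to-derivation phenomenon recorded in the preliminaries but now for the bimodule $\m$. Granting this, the derivation problem (Remark~\ref{der}) supplies $\eta\in\m$ with $\hat\Delta(f)=f*\eta-\eta*f$, so $\Delta(f)=f*\mu-\nu*f$ with $\mu=\eta$, $\nu=\eta-\xi$ and $\mu-\nu=\xi\in\mathcal{Z}(\m)$. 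Finally, reinserting this form into $\Delta(f*g)=g*\Delta(f)+\Delta(g)*f-\xi*f*g$ and simplifying with centrality of $\mu-\nu$ produces exactly $[[f,g],\mu]+2[f,g]*(\mu-\nu)=0$.

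For (ii) I would run the same scheme through the bilinear map $\Phi(f,g)=\Delta(g^{\star})^{\star}*f+g*\Delta(f)$, which is bilinear because two conjugate-linear operations compose to a linear one, and which vanishes when $f*g=0$; here it is convenient to take $(u_i)$ self-adjoint. The limiting argument produces an identity with correction term $\xi^{\star}*f*g$ together with its mirror image, and the additional feature of the starred hypotheses is that the modified map turns out to be a $\star$-map, so that the associated Jordan derivation is a $\star$-Jordan-derivation; applying the $\star$-version of the derivation problem (Remark~\ref{der}) then gives $\Delta(f)=f*\nu-\mu*f$ with $Re\mu\in\mathcal{Z}(\m)$, and reinsertion yields $[[f,g],\mu]+(\nu-\mu)^{\star}*[f,g]+[f,g]*(\nu-\mu)=0$. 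Part (iii) I would not prove afresh: exactly as in Theorem~\ref{o1}-$(iii)$, the map $D(f)=\Delta(f^{\star})^{\star}$ satisfies the hypothesis of (ii), and transporting its conclusion back through $\Delta(f)=D(f^{\star})^{\star}$ gives the stated form $\Delta(f)=f*\mu-\nu*f$ with $Re\mu\in\mathcal{Z}(\m)$ and the corresponding commutator identity.

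I expect the main obstacle to be precisely the Jordan-to-derivation step for maps valued in $\m$ rather than in $\g$: the preliminaries record Sinclair's result only for $\g\to\g$, whereas $\hat\Delta$ genuinely leaves $\g$ (since $\xi*f\in\g$ but $\Delta(f)$ need not be), so one must invoke a bimodule version of the Bre\v{s}ar/Sinclair theorem valid for the semiprime algebra $\g$, or else extract it from the fact that $\g$ is an ideal in $\m$. The remaining points are bookkeeping: keeping the fixed factor outside each weak$^{*}$ limit so that separate weak$^{*}$-continuity applies, and checking that a self-adjoint bounded approximate identity can still be arranged to converge weak$^{*}$ to $\delta_e$ for part (ii).
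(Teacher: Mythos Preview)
Your plan coincides with the paper's proof almost line for line: same bilinear map $\phi(f,g)=g*\Delta(f)+\Delta(g)*f$, same use of Lemma~\ref{bi}-$(i)$, the same two substitutions $h=u_i$ and $f=u_i$ to pin down $\xi$ and force $\xi\in\mathcal{Z}(\m)$, the same subtraction $\hat\Delta=\Delta-\xi*(\,\cdot\,)$ to obtain a Jordan derivation, and the same reduction of (iii) to (ii) via $f\mapsto\Delta(f^{\star})^{\star}$.

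The one place where you leave a gap---the Jordan-to-derivation upgrade for an $\m$-valued map---is not actually a gap, and the paper closes it without any bimodule extension of Sinclair's theorem. From the identity you yourself derive,
\[
\Delta(f*g)=g*\Delta(f)+\Delta(g)*f-\xi*f*g,
\]
every term on the right lies in $\g$ because $\g$ is a two-sided ideal in $\m$. Cohen's factorization says every $h\in\g$ is of the form $f*g$, hence $\Delta(\g)\subseteq\g$, and therefore $\hat\Delta(\g)\subseteq\g$ as well. Sinclair's theorem now applies verbatim, giving that $\hat\Delta:\g\to\g$ is a derivation, and the derivation problem finishes as you describe. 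Your claim that ``$\hat\Delta$ genuinely leaves $\g$'' is simply false; the anti-derivation-type identity itself forces the range into $\g$. The same mechanism works in part~(ii): once you have the analogue of the displayed identity, Cohen again gives $\Delta(\g)\subseteq\g$ and the $\star$-Jordan derivation is automatically $\g$-valued.

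A minor correction for (ii): in the paper the subtraction is $D(f)=\Delta(f)-f*\xi$ (right, not left, multiplication), and $\xi$ is \emph{not} shown to be central there; the $\star$-property of $D$ is obtained from a single chain of substitutions (first $f=u_i$, then take adjoints, then $h^{\star}=u_i$) rather than by comparing two mirror identities. There is also no need to arrange the approximate identity to be self-adjoint.
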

\begin{proof}
Suppose that $(u_i)_{i\in I}$ is a bounded approximate identity of $\g$ such that $u_{i}\xlongrightarrow{w^{*}}\delta_{e}$, where $\delta_{e}$ is the identity of $\m$.
\par 
$(i)$ Define a continuous bilinear map $\phi:\g \times \g \rightarrow \m$ by $\phi(f,g)=g*\Delta(f)+\Delta(g)*f$. Then $\phi (f,g) = 0$ for all $f,g \in \g$ with $f*g=0$. By applying Lemma \ref{bi}-$(i)$, we obtain $ \phi (f*g,h)=\phi (f,g*h)$ for all $f,g,h \in \g$. So 
\begin{equation}\label{anti1}
h*\Delta(f*g)+\Delta(h)*f*g= g*h*\Delta(f)+\Delta(g*h)*f,
\end{equation}
for all $f,g,h \in \g$. Since the net $(\Delta(u_{i}))_{i\in I}$ is bounded, we can assume that it converge to $\xi \in \m$ with respect to the weak$^{*}$ topology. On account of \eqref{anti1}, for all $f,g \in \g$ we have
\[ u_{i}*\Delta(f*g)+\Delta(u_{i})*f*g= g*u_{i}*\Delta(f)+\Delta(g*u_{i})*f.\]
From continuity of $\Delta$, we get $ u_{i}*\Delta(f*g)+\Delta(u_{i})*f*g$ converges to $g*\Delta(f)+\Delta(g)*f$ with respect to the norm topology. On the other hand, by separately $w^{*}$-continuity of convolution product in $\m$, it follows that $ u_{i}*\Delta(f*g)+\Delta(u_{i})*f*g$ converges to $\Delta(f*g)+\xi*f*g$ with respect to the weak$^{*}$ topology. Hence
\begin{equation}\label{anti2}
\Delta(f*g)=g*\Delta(f)+\Delta(g)*f-\xi*f*g
\end{equation}
for all $f,g,h \in \g$. Now letting $f=u_{i}$ in \ref{anti1}, we obtain
\[ g*h*\Delta(u_{i})+\Delta(g*h)*u_{i}=h*\Delta(u_{i}*g)+\Delta(h)*u_{i}*g.\]
By this identity and using similar arguments as above it
follows that
\begin{equation}\label{anti3}
\Delta(f*g)=g*\Delta(f)+\Delta(g)*f-f*g*\xi
\end{equation} 
for all $f,g,h \in \g$. Hence from \ref{anti2} and \ref{anti3}, for each $f,g,h \in \g$, we find that $\mu*f*g=f*g*\xi$. So by Cohen's factorization theorem and Lemma \ref{zz}-$(ii)$, it follows that $\xi \in \mathcal{Z}(\m)$. Define $D:\g\rightarrow \m$ by $D(f)=\Delta(f)-\xi*f$. By \ref{anti2} and the fact that $\xi \in \mathcal{Z}(\m)$, it follows that $D$ is an Jordan derivation. From Cohen's factorization theorem and \ref{anti2}, we obtain $\Delta(\g)\subseteq \g$ and hence $D(\g)\subseteq \g$. Since $\g$ is semisimple, it follows that $D$ is a derivation \cite{sin}. So by derivation problem \cite{lo}, there is a $\mu \in \m$, such that $D(f)=f*\mu-\mu*f$ for all $f\in \g$. Letting $\nu=\mu-\xi$. So $\xi=\mu-\nu \in \mathcal{Z}(\m)$ and $\Delta(f)=f*\mu-\nu*f$ for all $f\in \g$.
\par 
Now by \ref{anti3} and the fact that $D$ is a derivation we see that 
\begin{equation*}
\begin{split}
\Delta(f*g)+f*g*\xi&=g*\Delta(f)+\Delta(g)*f \\ &=
g*D(f)+g*\xi*f+D(g)*f+\xi*g*f\\
&= D(g*f)+2g*f*\xi\\ &=
\Delta(g*f)+g*f*\xi,
\end{split}
\end{equation*} 
for all $f,g\in \g$. So
\[ f*g*\mu-\nu*f*g+f*g*\xi=g*f*\mu-\nu*g*f+g*f*\xi,\]
and hence
\[ f*g*\mu- \mu *f*g+2f*g*\xi=g*f*\mu-\mu*g*f+2g*f*\xi,\] 
for all $f,g\in \g$. Therefore
\[ [[f,g],\mu]+2 [f,g]*(\mu-\nu)=0, \]
for all $f,g\in \g$.
\par 
$(ii)$ In order to prove this we consider the contiuous bilinear map $\phi:\g \times \g \rightarrow \m$ by $\phi(f,g)=\Delta(g^{\star})^{\star}*f+g*\Delta(f)$. If $f,g\in \g$ are such that $f*g=0$, then $\phi(f,g)=0$. So by Lemma \ref{bi}-$(i)$, we get $ \phi (f*g,h)=\phi (f,g*h)$ for all $f,g,h \in \g$. Therefore
\begin{equation}\label{anti4}
\Delta(h^{\star})^{\star}*f*g+h*\Delta(f*g)=\Delta(h^{\star}*g^{\star})^{\star}*f+g*h*\Delta(f),
 \end{equation}
 for all $f,g,h \in \g$. Setting $f=u_{i}$ in \ref{anti4} and by using similar methods as part (i), we get
 \begin{equation}\label{anti5}
\Delta(h^{\star})^{\star}*g+h*\Delta(g)=\Delta(h^{\star}*g^{\star})^{\star}+g*h*\xi,
 \end{equation}
 for all $g,h \in \g$, where $\xi\in \m$ and $\Delta(u_{i})\xlongrightarrow{w^{*}} \xi$. By \ref{anti5} we have
 \begin{equation}\label{anti55} g^{\star}*\Delta(h^{\star})+\Delta(g)^{\star}*h^{\star}=\Delta(h^{\star}*g^{\star})+\xi^{\star}*h^{\star}*g^{\star},\end{equation}
 for all $g,h \in \g$. Letting $h^{\star}=u_{i}$, we arrive at
 \[ g^{\star}*\xi+\Delta(g)^{\star}=\Delta(g^{\star})+\xi^{\star}*g^{\star},\]
 for all $g \in \g$. Hence
 \begin{equation}\label{anti6}
\Delta(g^{\star})-g^{\star}*\xi=(\Delta(g)-g*\xi)^{\star},
 \end{equation} 
 for all $g \in \g$. From \ref{anti55} we have
 \begin{equation}\label{anti555} 
 \Delta(f*g)=g*\Delta(f)+\Delta(g^{\star})^{\star}*f-\xi^{\star}*f*g,
 \end{equation}
 for all $f,g \in \g$.
  Define $D:\g\rightarrow \m$ by $D(f)=\Delta(f)-f*\xi$. By \ref{anti6} and \ref{anti555}, it follows that $D$ is a $\star$-Jordan derivation and $D(\g)\subseteq \g$. Hence it is a $\star$-derivation and so there is a $\mu\in \m$ with $Re\mu\in \mathcal{Z}(\m)$, such that $D(f)=f*\mu-\mu*f$ for all $f\in \g$. 
  \par 
  Now by \ref{anti555} and the fact that $D$ is a $\star$-derivation, we have 
  \[ \Delta(f*g)+\xi^{\star}*f*g=\Delta(g*f)+\xi^{\star}*g*f,\]
for all $f,g \in \g$. So 
\[ f*g*\mu-\mu*f*g+\xi^{\star}*f*g=g*f*\mu-\mu*g*f+\xi^{\star}*g*f,\]
and hence
\[ [[f,g],\mu]+\xi^{\star}*[f,g]+[f,g]\xi=0,\]
for all $f,g \in \g$. By setting $\nu=\mu+\xi$, we have $\Delta(f)=f*\nu-\mu*f$ and 
\[ [[f,g],\mu]+(\nu-\mu)^{\star}*[f,g]+[f,g]*(\nu-\mu)=0,\]
for all $f,g \in \g$, where $Re\mu\in \mathcal{Z}(\m)$.
\par 
$(iii)$ Consider the map $D:\g\rightarrow \m$ defined by $D(f)=\Delta(f^{\star})^{\star}$. It is easily seen that the map $D$ satisfies the conditions in part $(ii)$. So, there exists $\mu_{1}, \nu_{1}\in \m$ such that $D(f)=f*\nu_{1}-\mu_{1}*f$ for all $f \in \g$ with $Re\mu_{1}\in \mathcal{Z}(\m)$ and 
\[ [[f,g],\mu_{1}]+(\nu_{1}-\mu_{1})^{\star}*[f,g]+[f,g]*(\nu_{1}-\mu_{1})=0,\]
for all $f,g \in \g$. Then $\Delta(f)=f*\mu-\nu*f$ for all $f\in \g$, where $\nu=-\nu_{1}^{\star}$, $\mu=-\mu_{1}^{\star}$ with $Re\mu\in \mathcal{Z}(\m)$ and
\[ [[f,g],\mu]+[f,g]*(\mu-\nu)^{\star}+(\mu-\nu)[f,g]=0,\]
for all $f,g \in \g$ .
\end{proof}
Note that the converse of Theorems \ref{o1}-$(i)-(iii)$ and \ref{anti}-$(i)-(iii)$ hold. It is checked easily.
\section{Derivations through two-sided orthogonality conditions}
In this section we will consider a linear map $\Delta:\g \rightarrow \m$ behaving like derivation at two-sided orthogonality conditions, where $G\in [SIN]$.
\begin{thm}\label{two}
Let $G$ be a locally compact group with $G\in [SIN]$, and let $\Delta:\g \rightarrow \m $ be a continuous linear map.
\begin{enumerate} 
 \item[(i)] Assume that 
\[ f*g=g*f=0 \Longrightarrow f*\Delta(g)+\Delta(f)*g=g*\Delta(f)+\Delta(g)*f=0\quad (f,g \in \g).\]
Then there are measures $\mu,\nu \in \m$ such that $\Delta(f)=f*\mu-\nu*f$ for all $f\in \g$, where $\mu-\nu\in \mathcal{Z}(\m)$.
\item[(ii)] Assume that 
\begin{equation*}\label{1o2}
 f*g^{\star}=g^{\star}*f=0 \Longrightarrow f*\Delta(g)^{\star}+\Delta(f)*g^{\star}=\Delta(g)^{\star}*f+g^{\star}*\Delta(f)=0\quad (f,g \in \g).
 \end{equation*}
Then there are $\mu, \nu\in \m$ such that $\Delta(f)=f*\mu-\nu*f$ for all $f \in \g$, where $Re\mu\in \mathcal{Z}(\m)$ and $Re(\mu-\nu)\in \mathcal{Z}(\m)$.
\end{enumerate}
\end{thm}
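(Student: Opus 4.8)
\emph{Part (i).} Since $G\in[SIN]$, I would fix a self-adjoint central bounded approximate identity $(u_i)_{i\in I}$ of $\g$ with $u_i\xlongrightarrow{w^{*}}\delta_{e}$ (Remark \ref{we}), and define the continuous bilinear map $\phi:\g\times\g\rightarrow\m$ by $\phi(f,g)=f*\Delta(g)+\Delta(f)*g$. The first conclusion in the hypothesis says precisely that $\phi(f,g)=0$ whenever $f*g=g*f=0$, so Lemma \ref{bi}-(ii) applies to $\phi$. Passing to a subnet, assume $\Delta(u_i)\xlongrightarrow{w^{*}}\xi\in\m$. Feeding $u_i$ into the three-term identity $\phi(f*g,h)=\phi(f,g*h)$ ($f,h\in\mathcal{Z}(\g)$): putting $h=u_i$ gives $\Delta(f*g)=f*\Delta(g)+\Delta(f)*g-(f*g)*\xi$ for central $f$, and then putting $f=u_i$ in this identity and taking the limit yields $\xi*g=g*\xi$ for all $g\in\g$, so $\xi\in\mathcal{Z}(\m)$ by Lemma \ref{zz}-(ii). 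Set $D=\Delta-\xi*(\cdot)$. The same two substitutions show $D(f*g)=f*D(g)+D(f)*g$ whenever $f$ or $g$ is in $\mathcal{Z}(\g)$; in particular $\Delta(g*u_i)\in\g$, so $\Delta(\g)\subseteq\g$ ($\g$ being a closed ideal and $\Delta$ continuous), whence $D(\g)\subseteq\g$.

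The crux is to upgrade this ``one central factor'' identity to the full derivation property. Because $\xi$ is central, the four-term identity of Lemma \ref{bi}-(ii) holds verbatim with $\phi$ replaced by $\widetilde\phi(f,g)=f*D(g)+D(f)*g$ (the $\xi$-terms cancel in the alternating sum). Putting $f=u_i$ and taking limits produces, for all $a,b,c\in\g$, an identity $D(a*b*c)=\widetilde\phi(a,b*c)+\widetilde\phi(c,a*b)-\widetilde\phi(c*a,b)$, which, written through the defect $\psi(x,y):=D(x*y)-x*D(y)-D(x)*y$, collapses to
\[ \psi(c*a,b)=\psi(a,b*c)+\psi(c,a*b). \]
Putting $b=u_i$, the left side vanishes (a central second argument kills $\psi$), leaving $\psi(a,u_i*c)+\psi(c,a*u_i)=0$; letting $i$ run gives $\psi(a,c)+\psi(c,a)=0$. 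Thus $\psi$ is antisymmetric, so $\psi(f,f)=0$ and $D$ is a continuous Jordan derivation; by semisimplicity of $\g$, $D(\g)\subseteq\g$ and Sinclair's theorem \cite{sin}, $D$ is a derivation, and the derivation problem \cite{lo} gives $\mu\in\m$ with $D(f)=f*\mu-\mu*f$. Then $\Delta(f)=f*\mu-\nu*f$ with $\nu=\mu-\xi$ and $\mu-\nu=\xi\in\mathcal{Z}(\m)$.

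\emph{Part (ii).} I would run the same scheme with $\phi(f,g)=f*\Delta(g^{\star})^{\star}+\Delta(f)*g$, which (replacing $g$ by $g^{\star}$ in the first hypothesis) again vanishes on two-sided orthogonal pairs, together with the companion map $\chi(f,g)=\Delta(g^{\star})^{\star}*f+g*\Delta(f)$ coming from the second hypothesis. Extracting $\xi$ with $\Delta(u_i)\xlongrightarrow{w^{*}}\xi$, the three-term identity for $\phi$ now yields $\Delta(g)-\Delta(g^{\star})^{\star}=\xi*g-g*\xi^{\star}$, which says exactly that $D=\Delta-\xi*(\cdot)$ is a $\star$-map, while the identity for $\chi$ yields $\Delta(g)-\Delta(g^{\star})^{\star}=g*\xi-\xi^{\star}*g$; comparing the two gives $Re\xi*g=g*Re\xi$ for all $g$, so $Re\xi\in\mathcal{Z}(\m)$ by Lemma \ref{zz}-(ii). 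Using that $D$ is a $\star$-map one checks, as in (i), that $D$ is a derivation on products with a central factor and that the four-term identity passes to $\widetilde\phi$ (again the $\xi$- and $\xi^{\star}$-terms cancel); the antisymmetry argument then makes $D$ a $\star$-Jordan derivation, hence a $\star$-derivation. By Remark \ref{der}, $D(f)=f*\mu-\mu*f$ with $Re\mu\in\mathcal{Z}(\m)$, and $\Delta(f)=f*\mu-\nu*f$ with $\nu=\mu-\xi$, so $Re\mu\in\mathcal{Z}(\m)$ and $Re(\mu-\nu)=Re\xi\in\mathcal{Z}(\m)$.

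The step I expect to be the main obstacle is exactly obtaining the Jordan (hence, by Sinclair, the full derivation) property from the weak two-sided hypothesis: unlike the one-sided theorems, Lemma \ref{bi}-(ii) supplies only a three-term identity restricted to central variables together with a four-term identity, and converting the latter into the antisymmetry of $\psi$ by feeding the central approximate identity into the right slot is the delicate point. In (ii) the additional difficulty is the involution, where both companion maps $\phi$ and $\chi$ are needed in order to pin down only the real part of $\xi$ as central.
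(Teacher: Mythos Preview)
Your argument is correct and follows the same architecture as the paper's proof: fix a central bounded approximate identity, extract $\xi$ as the weak$^*$ limit of $\Delta(u_i)$, use the three-term identity from Lemma~\ref{bi}-(ii) to show $\xi\in\mathcal{Z}(\m)$, pass to $D=\Delta-\xi*(\cdot)$, and then exploit the four-term identity (with $u_i$ fed in) to force $D$ to be a Jordan derivation, after which Sinclair and the derivation problem finish the job. For part (ii) you match the paper in using both companion bilinear maps to isolate $Re\,\xi\in\mathcal{Z}(\m)$ and the $\star$-property of $D$, and then reduce to part (i).

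There is one genuine difference worth noting. To apply Sinclair's theorem one needs $D(\g)\subseteq\g$. The paper obtains this \emph{after} establishing the Jordan identity, by invoking a Jordan--Banach analogue of Cohen's factorization theorem \cite{ak1,ak2}: every $h\in\g$ factors as $h=f*g*f$, and then $D(h)=D(f)*g*f+f*D(g)*f+f*g*D(f)\in\g$. You instead obtain $\Delta(\g)\subseteq\g$ (hence $D(\g)\subseteq\g$) \emph{earlier}, directly from the three-term identity: writing $\Delta(u_i*g)=u_i*\Delta(g)+\Delta(u_i)*g-(u_i*g)*\xi\in\g$ and letting $u_i*g\to g$ in norm. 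This is more elementary and removes the dependence on the Jordan factorization results. Your packaging of the four-term step via the defect $\psi(x,y)=D(x*y)-x*D(y)-D(x)*y$ and its antisymmetry is exactly the computation the paper carries out in equations (4.5)--(4.6), just reorganized; note that your claim ``a central second argument kills $\psi$'' is justified by the $f=u_i$ substitution in the three-term identity for $D$ (giving $D(g*h)=g*D(h)+D(g)*h$ for $h\in\mathcal{Z}(\g)$), which you should state explicitly since the $h=u_i$ substitution alone only yields the case of a central \emph{first} argument.
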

\begin{proof}
Since $G\in [SIN]$, we can suppose that $(u_i)_{i\in I}$ is a central bounded approximate identity of $\g$ such that $u_{i}\xlongrightarrow{w^{*}} \delta_{e}$, where $\delta_{e}$ is the identity of $\m$. Also  we can assume that $\Delta(u_{i})\xlongrightarrow{w^{*}} \xi$ with $\xi \in \m$, since the net $(\Delta(u_{i}))_{i\in I}$ is bounded.
\par 
$(i)$
Define a continuous bilinear map $\phi:\g \times \g \rightarrow \m$ by $\phi(f,g)=f*\Delta(g)+\Delta(f)*g$. So $\phi(f,g)=0$, whenever $f*g=g*f=0$. Hence by Lemma \ref{bi}-$(ii)$, we get $ \phi (f*g,h)=\phi (f,g*h)$ for all $f,h \in \mathcal{Z}(\g)$ and $g\in \g$. Therefore
\begin{equation}\label{tw1}
f*g*\Delta(h)+\Delta(f*g)*h=f*\Delta(g*h)+\Delta(f)*g*h,
\end{equation}
for all $f,h \in \mathcal{Z}(\g)$ and $g\in \g$. Taking $f=u_{i}$ in \eqref{tw1}, we get
 \[u_{i}*g*\Delta(h)+\Delta(u_{i}*g)*h=u_{i}*\Delta(g*h)+\Delta(u_{i})*g*h,\]
 for all $h \in \mathcal{Z}(\g)$ and $g\in \g$. From continuity of $\Delta$, we get $u_{i}*\Delta(g*h)+\Delta(u_{i})*g*h$ converges to $g*\Delta(h)+\Delta(g)*h$ with respect to the norm topology. On the other hand, from separately $w^{*}$-continuity of convolution product in $\m$ and $\Delta(u_{i})\xlongrightarrow{w^{*}} \xi$, it follows that $u_{i}*\Delta(g*h)+\Delta(u_{i})*g*h \xlongrightarrow{w^{*}} \Delta(g*h)+\xi*g*h$. Hence 
\begin{equation}\label{tw2}
\Delta(g*h)+\xi*g*h=g*\Delta(h)+\Delta(g)*h,
 \end{equation}
 for all $h \in \mathcal{Z}(\g)$ and $g\in \g$. Now letting $h=u_{i}$ in \ref{tw2} and similar as above we get $\xi*g=g*\xi$ for all $g\in \g$. From Lemma \ref{zz}-$(ii)$, it follows that $g\in \mathcal{Z}(\m)$. Define $D:\g\rightarrow \m$ by $D(f)=\Delta(f)-\xi*f$. Then $D$ is a continuous linear map which
satisfies 
\begin{equation}\label{tw3}
 f*g=g*f=0 \Longrightarrow f*D(g)+D(f)*g=g*D(f)+D(g)*f=0\quad (f,g \in \g),
 \end{equation}
 and $D(u_{i})\xlongrightarrow{w^{*}} 0$. We will show that $D$ is a  derivation. In order to prove this we consider the continuous bilinear map $\phi:\g \times \g \rightarrow \m$ by $\phi(f,g)=f*D(g)+D(f)*g$. If $f,g\in \g$ are such that $f*g=g*f=0$, then \eqref{tw3} gives $\phi(f,g)=0$. So by Lemma \ref{bi}-$(ii)$, we get 
 \[\phi (f*g,h*k)-\phi(f,g*h*k)+\phi (k*f,g*h)-\phi(k*f*g,h),\]
 for all $f,g,h \in \g$. Therefore
 \begin{equation}\label{tw4}
 \begin{split}
&f*g*D(h*k)+D(f*g)*h*k-f*D(g*h*k)-D(f)*g*h*k+\\ &
k*f*D(g*h)+D(k*f)*g*h-k*f*g*D(h)-D(k*f*g)*h=0,
\end{split}
 \end{equation}
 for all $f,g,h,k \in \g$. Taking $f=u_{i}$ in \eqref{tw4}, since $D(u_{i})\xlongrightarrow{w^{*}} 0$, it follows that
 \begin{equation}\label{tw5}
 g*D(h*k)+D(g)*h*k-D(g*h*k)+k*D(g*h)+D(k)*g*h-k*g*D(h)-D(k*g)*h=0,
 \end{equation}
 for all $g,h,k \in \g$. Now letting $h=u_{i}$ in \eqref{tw5}, we get 
  \begin{equation}\label{tw6}
 g*D(k)+D(g)*k-D(g*k)+k*D(g)+D(k)*g-D(k*g)=0,
 \end{equation}
 for all $g,k \in \g$. So $D$ is a Jordan derivation. An analogue of the Cohen factorization theorem on Banach algebras holds for Jordan-Banach algebras (see \cite{ak1, ak2}). This result implies that for any $h\in \g$ there exist $f,g \in \g$ such that $h=f*g*f$. Since $D$ is a Jordan derivation, it follows that 
 \[ D(h)=D(f*g*f)=D(f)*g*f+f*D(g)*f+f*g*D(f),\]
 for all $h\in \g$, where $h=f*g*f$. Thus $D(\g)\subseteq \g$ and hence $D$ is a derivation. So by derivation problem there is a $\mu\in \m$ such that $D(f)=f*\mu-\mu*f$ for all $f\in \g$. Setting $\nu=\mu-\xi$. So $\Delta(f)=f*\mu-\nu*f$ for all $f \in \g$ and $\mu-\nu\in \mathcal{Z}(\m)$.
 \par 
$(ii)$ Define continuous bilinear maps $\phi, \psi:\g \times \g \rightarrow \m$ by \[\phi(f,g)=f*\Delta(g^{\star})^{\star}+\Delta(f)*g \,\, \text{and} \,\, \psi(f,g)=\Delta(g^{\star})^{\star}*f+g*\Delta(f),\]
for all $f,g \in \g$. It is straightforward to check that $\phi(f,g)=0$ and $\psi(f,g)=0$, whenever $f*g=g*f=0$. Thus by Lemma \ref{bi}-$(ii)$, we get 
\begin{equation}\label{tw7}
f*g*\Delta(h^{\star})^{\star}+\Delta(f*g)*h=f*\Delta(h^{\star}*g^{\star})^{\star}+\Delta(f)*g*h,
\end{equation}
and
\begin{equation}\label{tw8}
\Delta(h^{\star})^{\star}*f*g+h*\Delta(f*g)=\Delta(h^{\star}*g^{\star})^{\star}*f+g*h*\Delta(f),
\end{equation}
for all $f,h \in \mathcal{Z}(\g)$ and $g\in \g$. Letting $f=u_{i}$ in \eqref{tw7} and \eqref{tw8}, and applying $u_{i}\xlongrightarrow{w^{*}} \delta_{e}$, $\Delta(u_{i})\xlongrightarrow{w^{*}} \xi$ we obtain
\begin{equation*}
g*\Delta(h^{\star})^{\star}+\Delta(g)*h=\Delta(h^{\star}*g^{\star})^{\star}+\xi*g*h,
\end{equation*}
and
\begin{equation*}
\Delta(h^{\star})^{\star}*g+h*\Delta(g)=\Delta(h^{\star}*g^{\star})^{\star}+g*h*\xi,
\end{equation*}
for all $h \in \mathcal{Z}(\g)$ and $g\in \g$. We now apply $\star$, set $h^{\star}=u_{i}$, to see that
\begin{equation}\label{tw9}
\xi*g^{\star}+\Delta(g)^{\star}=\Delta(g^{\star})+g^{\star}*\xi^{\star},
\end{equation}
and
\begin{equation}\label{tw10}
g^{\star}*\xi+\Delta(g)^{\star}=\Delta(g^{\star})+\xi^{\star}*g^{\star},
\end{equation}
for all $g\in \g$. From equations \eqref{tw9} and \eqref{tw10}, we get $(\xi+\xi^{\star})*g^{\star}=g^{\star}*(\xi^{\star}+\xi)$ for all $g\in \g$. Hence $Re\xi \in \mathcal{Z}(\m)$.
\par 
Define $D:\g\rightarrow \m$ by $D(f)=\Delta(f)-\xi*f$. Then $D$ is a continuous linear map and by \eqref{tw9}, we  have $D(f^{\star})=D(f)^{\star}$ for all $f\in\g$. Hence $D$ is a $\star$-map. If $f*g=g*f=0$, then by hypothesis, definition of $D$ and the fact that $D$ is a $\star$-map and $Re\xi \in \mathcal{Z}(\m)$, we have
\begin{equation*}
\begin{split}
f*D(g)+D(f)*g&=f*D(g^{\star})^{\star}+D(f)*g\\ &=
f*(\Delta(g^{\star})-\xi*g^{\star})^{\star}+(\Delta(f)-\xi*f)*g=0,
\end{split}
\end{equation*}
and 
\begin{equation*}
\begin{split}
g*D(f)+D(g)*f&=g*D(f)+D(g^{\star})^{\star}*f\\ &=
g*(\Delta(f)-\xi*f)+(\Delta(g^{\star})-\xi*g^{\star})^{\star}*f\\&=
-g*\xi*f-g*\xi^{\star}*f=-g*f*(\xi+\xi^{\star})=0.
\end{split}
\end{equation*}
So $D$ satisfies in $(i)$ and hence there are $\mu,\nu \in \m$ such that $D(f)=f*\mu-\nu_{1}*f$ for all $f\in \g$. Since $D(u_{i})\xlongrightarrow{w^{*}} 0$ , it follows that $\mu=\nu_{1}$. On the other hand $D$ is a $\star$-map, so by Remark \ref{der}, $D(f)=f*\mu-\mu*f$ for all $f\in \g$, where $Re\mu\in \mathcal{Z}(\m)$. Therefore by letting $\nu=\mu-\xi$ we have $\Delta(f)=f*\mu-\nu*f$ for all $f\in \g$, where $Re\mu\in \mathcal{Z}(\m)$ and $Re(\mu-\nu)\in \mathcal{Z}(\m)$.
\end{proof}
Note that the converse of Theorems \ref{two}-$(i)-(iii)$ holds. It is checked easily.


\bibliographystyle{amsplain}
\bibliography{xbib}

\end{document}